\newtheorem{prop}{Proposition}
\newtheorem{theorem}{Theorem}
\newtheorem{lemma}[prop]{Lemma}
\theoremstyle{definition}
\theoremstyle{remark}
\newtheorem{remark}[prop]{Remark}
\newtheorem{example}[prop]{Example}
\newtheorem*{conj}{Conjecture}
\newcommand{\inv}{^{-1}}
\newcommand{\sidehat}{^{\wedge}}
\newcommand{\unit}{^{(0)}}
\newcommand{\R}{\mathbb{R}}
\newcommand{\N}{\mathbb{N}}
\newcommand{\Z}{\mathbb{Z}}
\newcommand{\Q}{\mathbb{Q}}
\newcommand{\T}{\mathbb{T}}
\title{Groupoid $C^*$-algebras with Hausdorff Spectrum}
\author{Geoff Goehle}
\address{Mathematics and Computer Science Department, Stillwell 426,
  Western Carolina University, Cullowhee, NC 28723}
\email{grgoehle@email.wcu.edu}
\subjclass[2010]{22A22,47A67,46L99}
\begin{document}

\begin{abstract}
Suppose $G$ is a second countable, locally compact Hausdorff groupoid
with abelian stabilizer subgroups and a Haar system.  
We provide necessary and sufficient conditions for the groupoid
$C^*$-algebra to have Hausdorff spectrum.  In particular we show that
the spectrum of $C^*(G)$ is Hausdorff if and only if the stabilizers
vary continuously with respect to the Fell topology, the orbit space
$G\unit/G$ is Hausdorff, and, given convergent sequences $\chi_i\to
\chi$ and $\gamma_i\cdot\chi_i \to \omega$ in the dual stabilizer
groupoid $\widehat{S}$ where the $\gamma_i\in G$ act via conjugation,
if $\chi$ and $\omega$ are elements of the same fiber then $\chi =
\omega$.  
\end{abstract}

\maketitle


\section*{Introduction}

One of the reasons that $C^*$-algebras are so well studied is that
they have a very deep representation theory.  Understanding the
spectrum or primitive ideal space of a $C^*$-algebra, and in
particular the topology on these spaces, can reveal a great deal of
information about the underlying algebra.  For example, if a separable
$C^*$-algebra $A$ has Hausdorff spectrum $\widehat{A}$ then $A$ is
naturally isomorphic to the section algebra of an upper-semicontinuous
bundle over $\widehat{A}$ such that each fiber of the bundle is
isomorphic to the compact operators.  The continuous trace
$C^*$-algebras, which can be classified by a cohomology element,
are then algebras with Hausdorff spectrum whose
associated bundles are ``locally trivial'' in an appropriate sense
\cite[Chapter 5]{tfb}.  Given a class of $C^*$-algebras it is an
interesting problem to characterize those 
algebras which have Hausdorff spectrum. 

For example, in \cite{tghs} the author proves the following result.
Suppose we are given a transformation group $(H,X)$ such that $H$ is
abelian and the group action satisfies any of the conditions in the
Mackey-Glimm dichotomy \cite{groupoiddichotomy}.  Then the
transformation group $C^*$-algebra will have Hausdorff spectrum if and
only if the stabilizer subgroups of the action vary continuously with
respect to the Fell topology and the orbit space $X/H$ is Hausdorff.  
In this paper we would like to
extend the work of \cite{tghs} from transformation groups to
groupoids.  The most straightforward  generalization is the conjecture
that, given a groupoid $G$ with abelian stabilizer subgroups which satisfies
the conditions of the Mackey-Glimm dichotomy, the groupoid
$C^*$-algebra will have Hausdorff spectrum if and only if the
stabilizers vary continuously in $G$ and $G\unit/G$ is Hausdorff.
Interestingly, we will show that this ``naive'' generalization fails and
that characterizing the groupoid $C^*$-algebras with Hausdorff
spectrum requires a third condition.  Furthermore, the correct
generalization, presented in Section \ref{sec:groupoid-c-algebras} as 
Theorem \ref{thm:groupoidresult}, is in some
ways stronger than the results of \cite{tghs}, even for
transformation groups.  We finish the
paper by providing some further examples in Section
\ref{sec:hausd-spectr-dual}.  In addition, we also prove that, unlike the
$T_0$ or $T_1$ case, in the Hausdorff case the spectrum cannot be
studied using only the stabilizer subgroupoid.  

Before we get started we should review some preliminary
material.  Throughout the paper we will let $G$ denote a second
countable, locally compact Hausdorff groupoid with a Haar system
$\{\lambda_u\}$.  We will use $G\unit$ to denote the unit space, $r$ to
denote the range map, and $s$ to denote the source map.   We will let
$S = \{\gamma\in G : s(\gamma) = r(\gamma)\}$ be the stabilizer, or
isotropy, subgroupoid of $G$.  Observe that on $S$ the range and
source maps are equal and that $r=s:S\to G\unit$ gives $S$ a bundle
structure over $G\unit$.  Given $u\in G\unit$ the fiber $S_u =
r|_S\inv(u)$ is a group and is called the stabilizer
subgroup at $u$.    
Since $S$ is a closed subgroupoid of $G$, it is always second
countable, locally compact, and Hausdorff.  However, $S$ will have a
Haar system if and only if the stabilizers vary continuously.  That
is, if and only if the map $u\mapsto S_u$ is continuous with respect
to the Fell topology on closed subsets of $S$ \cite[Lemma
1.3]{renaultgcp}.  

One of the primary examples of groupoids are those built from
transformation groups.  If a second countable locally compact
Hausdorff group $H$ acts on a second countable locally compact
Hausdorff space $X$ then we can form the transformation groupoid
$H\ltimes X$ in the usual fashion.  The properties of the
transformation groupoid are closely tied to those of the group
action.  For instance, the orbit space $H\ltimes X\unit/H\ltimes X$ 
is homeomorphic to the orbit space of the action $X/H$.  Furthermore, 
the stabilizer groups $S_X$ of $H\ltimes X$  can be naturally
identified with the stabilizer subgroups $H_x$ of $H$ with respect to the
group action and the stabilizers will vary continuously in
$H\ltimes X$ if and only if they vary continuously in $H$.  

Given a groupoid $G$ we can construct the groupoid
$C^*$-algebra $C^*(G)$ as a universal completion of the convolution
algebra $C_c(G)$ \cite{groupoidapproach, coords}.  Of particular interest
to us will be the spectrum $C^*(G)\sidehat$ of the groupoid algebra.
One special case which will play a key role in our results 
is the spectrum of the
stabilizer subgroupoid.  Suppose that $G$ has abelian stabilizer
subgroups, that is, suppose the fibers of $S$ are all abelian.  If the
stabilizers vary continuously so that $S$ has a Haar system then 
we may construct the groupoid
algebra $C^*(S)$.  It turns out that in this case $C^*(S)$ is abelian
and the spectrum of $C^*(S)$, 
denoted by $\widehat{S}$, is a second countable locally
compact Hausdorff space which is naturally fibered over $G\unit$.
Furthermore the fiber of $\widehat{S}$ over $u\in G\unit$, which we
will write as
$\widehat{S}_u$, is the Pontryagin dual of the fiber $S_u$
\cite[Section 3]{ctgIII}.  We refer to $\widehat{S}$ as the dual
stabilizer groupoid.  One of the things that makes $\widehat{S}$ so
useful is that its topology is relatively well understood;
\cite{ctgIII} gives a complete description of the
convergent sequences in $\widehat{S}$.  Since we will use this
characterization quite a bit we have restated it below. 

\begin{prop}[{\cite[Proposition 3.3]{ctgIII}}]
\label{prop:3}
Suppose the groupoid $G$ has continuously varying abelian
stabilizers and that $\{\chi_n\}$ is a sequence in $\widehat{S}$ with
$\chi_n \in \widehat{S}_{u_n}$ for all $n$.  Given $\chi\in
\widehat{S}_u$ we have $\chi_n \to \chi$ if and only if 
\begin{enumerate}
\item $u_n \to u$ in $G\unit$, and 
\item given $s_n \in S_{u_n}$ for all $n$ and $s\in S_u$ if $s_n\to s$
  then $\chi_n(s_n)\to \chi(s)$.  
\end{enumerate}
\end{prop}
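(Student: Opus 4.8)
The plan is to work with the concrete description of $\widehat{S}$ as the set $\{(u,\chi):u\in G\unit,\ \chi\in\widehat{S_u}\}$ in which $(u,\chi)$ is the character of $C^*(S)$ that sends $f\in C_c(S)$ to $\widehat f(u,\chi):=\int_{S_u}f(t)\overline{\chi(t)}\,d\beta_u(t)$, where $\{\beta_u\}$ is the Haar system on $S$; equivalently, one restricts $f$ to the fiber group $S_u$ and applies its Fourier transform at $\chi$. Since $C_c(S)$ is dense in $C^*(S)$ and each such character has norm at most one, the Gelfand topology on $\widehat{S}$ coincides with the initial topology determined by the functions $\widehat f\colon\widehat{S}\to\C$ for $f\in C_c(S)$, so $\chi_n\to\chi$ in $\widehat{S}$ holds exactly when $\widehat f(u_n,\chi_n)\to\widehat f(u,\chi)$ for every $f\in C_c(S)$; the whole argument amounts to matching this condition against (a) and (b). Since $\widehat{S}$ is second countable it suffices to argue with sequences, and after passing to subsequences and separating off the terms with $u_n=u$ --- for which the statement reduces to joint continuity of $(\psi,t)\mapsto\psi(t)$ on $\widehat{S_u}\times S_u$ --- one may assume the $u_n$ are distinct and different from $u$.

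For the forward implication, suppose $\chi_n\to\chi$. Condition (a) comes from the $C_0(G\unit)$-algebra structure of $C^*(S)$: one checks $\widehat{(\phi\cdot f)}(v,\psi)=\phi(v)\widehat f(v,\psi)$ for $\phi\in C_c(G\unit)$, where $(\phi\cdot f)(t)=\phi(r(t))f(t)$, so choosing $f$ with $\widehat f(u,\chi)\neq0$ forces $\phi(u_n)\to\phi(u)$ for all such $\phi$, hence $u_n\to u$. Condition (b) is the heart of the matter. Given $s_n\in S_{u_n}$ with $s_n\to s\in S_u$, the idea is to encode ``convolution by the whole sequence $(s_n)$'' as a single element of $C_c(S)$: set $C=\{u\}\cup\{u_n:n\in\N\}$, a compact subset of $G\unit$, so that $Z=(r|_S)\inv(C)$ is closed in $S$, and define $g\colon Z\to\C$ by $g(t)=f(s_n\inv t)$ for $r(t)=u_n$ and $g(t)=f(s\inv t)$ for $r(t)=u$. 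Using continuity of multiplication and inversion on $S$ together with the fact that each $u_n$ is isolated in $C$ (the $u_n$ are distinct and converge to $u$), I would check that $g$ is continuous on $Z$ and supported in a compact subset of $S$, and then extend it by Tietze to $\tilde f\in C_c(S)$ with $\tilde f|_Z=g$. Translation invariance of the Haar measures then gives $\widehat{\tilde f}(u_n,\chi_n)=\overline{\chi_n(s_n)}\,\widehat f(u_n,\chi_n)$ and $\widehat{\tilde f}(u,\chi)=\overline{\chi(s)}\,\widehat f(u,\chi)$; starting again from an $f$ with $\widehat f(u,\chi)\neq0$ and applying $\chi_n\to\chi$ to both $f$ and $\tilde f$ yields $\overline{\chi_n(s_n)}\to\overline{\chi(s)}$, which is (b).

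For the converse, assume (a) and (b); one must show $\widehat f(u_n,\chi_n)\to\widehat f(u,\chi)$ for each $f\in C_c(S)$. Writing $g_n(t)=f(t)\overline{\chi_n(t)}$ on $S_{u_n}$ and $g(t)=f(t)\overline{\chi(t)}$ on $S_u$, one has $\|g_n\|_\infty\le\|f\|_\infty$, every $g_n$ is supported in the fixed compact set $\operatorname{supp}f$, and continuity of $f$ combined with (b) shows that $t_n\to t$ (with $t_n\in S_{u_n}$, $t\in S_u$) implies $g_n(t_n)\to g(t)$. The remaining step is the standard continuity lemma for continuous Haar systems: such a uniformly bounded, uniformly compactly supported, fiberwise-convergent sequence satisfies $\int g_n\,d\beta_{u_n}\to\int g\,d\beta_u$. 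I would prove this lemma by using Tietze to pick $F\in C_c(S)$ with $F|_{S_u}=g$, checking $\sup_{S_{u_n}}|g_n-F|\to0$ by a compactness argument (using that $G\unit$ is Hausdorff, so a convergent sequence of $t_n\in S_{u_n}$ must have limit in $S_u$), bounding $\beta_{u_n}(\operatorname{supp}F)$ uniformly via continuity of the Haar system, and then applying continuity of the Haar system to $F$; this gives $\widehat f(u_n,\chi_n)=\int g_n\,d\beta_{u_n}\to\int g\,d\beta_u=\widehat f(u,\chi)$.

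The step I expect to be the main obstacle is condition (b) in the forward direction. The naive approach --- testing the character at $(u_n,\chi_n)$ against a bump function concentrated near $s$ --- fails because the moduli of continuity of the characters $\chi_n$ are not uniform in $n$; what gets around this is the observation that, since $\{u\}\cup\{u_n\}$ is compact with closed preimage in $S$, the ``translate of $f$ by the convergent sequence $(s_n)$'' is the restriction of an honest function in $C_c(S)$, after which multiplicativity of the Fourier transform finishes the argument. A secondary, routine technical point is the Haar-system continuity lemma used for the converse.
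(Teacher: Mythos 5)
The paper does not prove this proposition at all: it is imported verbatim from Muhly--Renault--Williams \cite[Proposition 3.3]{ctgIII}, so there is no in-paper argument to compare against. Judged on its own, your proof is essentially correct and follows the same Fourier-transform strategy as the original source: identify the Gelfand topology on $\widehat{S}$ with the initial topology from $f\mapsto\widehat f(u,\chi)=\int_{S_u}f\overline{\chi}\,d\beta_u$ for $f\in C_c(S)$, get (a) from the $C_0(G\unit)$-module structure, and get (b) from multiplicativity of translation under the fiberwise Fourier transform. The key device --- realizing the translates $t\mapsto f(s_n\inv t)$ simultaneously as the restriction to the closed set $r|_S\inv(\{u\}\cup\{u_n\})$ of a single $\tilde f\in C_c(S)$, which works precisely because each $u_n$ is isolated in that compact set --- is the right way around the non-uniformity of the $\chi_n$, and your separate treatment of the constant-fiber case via the compact-open topology on $\widehat{S_u}$ is needed (the gluing genuinely fails there) and correct. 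Two small points deserve explicit care if you write this up: first, in the converse, hypothesis (b) is stated for full sequences but your compactness argument applies it along a subsequence $t_{n_k}\to t$; you should interleave, using openness of $r|_S$ (which follows from the Haar system on $S$) to choose $s_n\in S_{u_n}$ with $s_n\to t$ for the missing indices. Second, the deduction of $\overline{\chi_n(s_n)}\to\overline{\chi(s)}$ by dividing $\widehat{\tilde f}$ by $\widehat f$ is only valid for $n$ large enough that $\widehat f(u_n,\chi_n)\neq0$, which is exactly what convergence to the nonzero limit $\widehat f(u,\chi)$ provides. Neither is a gap in the idea, just bookkeeping.
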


The final thing we need to review is the notion of a groupoid action.
A groupoid $G$ can only act on spaces $X$ which are fibered over $G\unit$.
If there is a surjective function $r_X:X\to G\unit$ then we define a groupoid
action via a map $\{(\gamma,x):s(\gamma)=r_X(x)\}\to X$ such that for
composable $\gamma$ and $\eta$ we have $\gamma\cdot(\eta\cdot x) =
\gamma\eta\cdot x$.  Among other things, this implies that $r_X(x)\cdot x
= x$ for all $x\in X$ and $r_X(\gamma\cdot x) = r(\gamma)$.  We will
use the following three actions in this paper.  Any
groupoid $G$ has actions on its unit space $G\unit$ and its
stabilizer subgroupoid $S$ which are defined as follows
\begin{align*}
\gamma\cdot u = \gamma u \gamma\inv = r(\gamma)\quad\text{on
  $G\unit$, and} \quad
\gamma\cdot s = \gamma s \gamma\inv \quad\text{on $S$.}
\end{align*}
Furthermore if $S$ has abelian fibers which vary
continuously then there is an action of $G$ on $\widehat{S}$.
For $\gamma\in G$, $\chi\in \widehat{S}_{s(\gamma)}$ we define
\[
\gamma\cdot \chi(s) = \chi(\gamma\inv s \gamma)\quad\text{for $s\in
  S_{r(\gamma)}$.}
\]
Given an action of $G$ on a space $X$ we will use $G\cdot x$ to denote
the orbit of $x$ in $X$ and $[x]$ to denote the corresponding element
of $X/G$.  We would also like to recall that the orbit space $X/G$ is
locally compact, but not necessarily Hausdorff, and that the quotient
map $q:X\to X/G$ is open as long as $G$ has a Haar system \cite[Lemma
2.1]{groupoidcohom}.   

\section{Groupoid $C^*$-algebras with Hausdorff Spectrum}
\label{sec:groupoid-c-algebras}

As mentioned in the introduction, we would like to generalize the main
result of \cite{tghs}, which has been restated below, from 
transformation groups to groupoids. 

\begin{theorem}[{\cite[Page 320]{tghs}}]
\label{thm:transresult}
Suppose that $(H,X)$ is an abelian transformation group and that the
maps of $H/H_x$ onto $H\cdot x$ are homeomorphisms for each $x\in X$.
Then the spectrum of the transformation group $C^*$-algebra $C^*(H,X)$
is Hausdorff if and only if the map $x\mapsto
H_x$ is continuous with respect to the Fell topology and $X/H$ is Hausdorff. 
\end{theorem}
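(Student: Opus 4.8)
The plan is to pass through the structure theory of $\widehat{C^*(H,X)}$ and then reduce Hausdorffness of the spectrum to Hausdorffness of a single orbit space. Write $A=C^*(H,X)$ and $S=H\ltimes X$, and recall that for second countable $(H,X)$ the hypothesis that each $H/H_x\to H\cdot x$ is a homeomorphism is equivalent to the action being smooth, one of the forms of the Mackey--Glimm dichotomy. By the Gootman--Rosenberg theorem $A$ is EH-regular, and smoothness makes the resulting parametrization clean: every element of $\widehat A$ is the class of an induced representation $\operatorname{Ind}(\varepsilon_x\otimes\chi)$ for a point $x\in X$ and a character $\chi\in\widehat{H_x}$ of its abelian stabilizer; such a representation is irreducible because the little group $H_x$ is abelian; and $\operatorname{Ind}(\varepsilon_x\otimes\chi)\cong\operatorname{Ind}(\varepsilon_{x'}\otimes\chi')$ exactly when $(x,\chi)$ and $(x',\chi')$ lie in the same $S$-orbit. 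When the stabilizers vary continuously, so that $\widehat S$ carries the topology of Proposition~\ref{prop:3}, Williams' description of the topology on $\widehat A$ may be stated as: $\chi\mapsto\operatorname{Ind}(\varepsilon_x\otimes\chi)$ descends to a homeomorphism $\widehat S/S\cong\widehat A$.

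For the ``if'' direction, assume the stabilizers vary continuously and $X/H$ is Hausdorff; I must show $\widehat S/S$ is Hausdorff. Since $S$ has a Haar system the quotient map $\widehat S\to\widehat S/S$ is open, so it suffices to prove the orbit relation $R=\{(\chi,\gamma\cdot\chi)\}$ is closed in $\widehat S\times\widehat S$, and by second countability this is a statement about convergent sequences. So suppose $\chi_n\to\chi$, $\psi_n\to\psi$ and $\psi_n=\gamma_n\cdot\chi_n$ with $\gamma_n=(h_n,x_n)$; I want $\psi\in S\cdot\chi$. Comparing base points via Proposition~\ref{prop:3}(a) gives $x_n\to x$ and $h_nx_n\to y$ with $\chi\in\widehat S_x$ and $\psi\in\widehat S_y$, and since $[x_n]=[h_nx_n]$ in $X/H$ the Hausdorff hypothesis forces $[x]=[y]$; translating $\psi_n$ and $\psi$ by a suitable group element (and using continuity of the action) I may assume $y=x$, so $\chi,\psi\in\widehat{H_x}$ and it remains to see $\chi=\psi$. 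Fix $s\in H_x$. Fell continuity of the stabilizers and $x_n\to x$ let me choose $s_n\in H_{x_n}$ with $s_n\to s$; because $H$ is abelian, $H_{h_nx_n}=H_{x_n}$ and $\psi_n(s_n)=(\gamma_n\cdot\chi_n)(s_n)=\chi_n(h_n\inv s_n h_n)=\chi_n(s_n)$ --- and it is precisely this collapse $h_n\inv s_n h_n=s_n$, i.e. the use of abelianness, that the groupoid generalization must replace by an extra hypothesis. Applying Proposition~\ref{prop:3}(b) to the sequences $\chi_n\to\chi$ and $\psi_n\to\psi$ and letting $n\to\infty$ gives $\chi(s)=\psi(s)$; as $s$ was arbitrary, $\chi=\psi$. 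Hence $R$ is closed and $\widehat A\cong\widehat S/S$ is Hausdorff.

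For the ``only if'' direction I would prove the contrapositive in each case by producing a single sequence in $\widehat A$ with two distinct limit points. Start from any $x_n\to x$ in $X$; passing to a subsequence, compactness of the space of closed subgroups of $H$ in the Fell topology gives $H_{x_n}\to K$ for some closed subgroup $K$, and the elementary remark that $s_n\in H_{x_n}$, $s_n\to s$, $x_n\to x$ force $sx=x$ shows $K\le H_x$. Set $\rho_n=\operatorname{Ind}(\varepsilon_{x_n}\otimes 1_{H_{x_n}})$. The key analytic input is continuity of the induction process for the varying subgroups $H_{x_n}$: one gets $\rho_n\to\operatorname{Ind}_K^{H}(\varepsilon_x\otimes 1_K)$ in $\operatorname{Rep}(A)$, and by induction in stages the support of this (reducible) representation contains $\operatorname{Ind}(\varepsilon_x\otimes\chi)$ for every $\chi\in\widehat{H_x}$ with $\chi|_K=1$; consequently every such $\operatorname{Ind}(\varepsilon_x\otimes\chi)$ is a limit point of the sequence $(\rho_n)$ in $\widehat A$. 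If the stabilizers do not vary continuously, choose $x_n\to x$ along which continuity fails, so after passing to a subsequence $K\subsetneq H_x$; then $\widehat{H_x}$ contains a character $\chi_0\neq 1$ trivial on $K$, and $\operatorname{Ind}(\varepsilon_x\otimes 1)$ and $\operatorname{Ind}(\varepsilon_x\otimes\chi_0)$ are two distinct limit points of $(\rho_n)$, so $\widehat A$ is not Hausdorff. If instead $X/H$ is not Hausdorff, choose $x_n\to x$ and $h_n\in H$ with $h_nx_n\to y$ but $[x]\neq[y]$; since $\rho_n=\operatorname{Ind}(\varepsilon_{x_n}\otimes 1_{H_{x_n}})=\operatorname{Ind}(\varepsilon_{h_nx_n}\otimes 1_{H_{h_nx_n}})$, running the computation at both $x$ and $y$ exhibits $\operatorname{Ind}(\varepsilon_x\otimes 1)$ and $\operatorname{Ind}(\varepsilon_y\otimes 1)$, which are inequivalent because $x$ and $y$ lie in different orbits, as distinct limit points of $(\rho_n)$; again $\widehat A$ is not Hausdorff.

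The step I expect to be the main obstacle is the ``key analytic input'': controlling the induced representations $\operatorname{Ind}(\varepsilon_{x_n}\otimes 1_{H_{x_n}})$ as $n\to\infty$ when the stabilizer subgroups $H_{x_n}$ themselves vary, and then converting ``$\rho$ is weakly contained in every tail of $(\rho_n)$'' into ``$\rho$ is a genuine limit point of $(\rho_n)$ in $\widehat A$''. A secondary point requiring care is making precise the identification $\widehat A\cong\widehat S/S$ as topological spaces used in the ``if'' direction; once those are in hand, the remainder is formal manipulation with Proposition~\ref{prop:3} and the Fell topology.
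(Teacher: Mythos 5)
Your proposal is essentially correct, but it is worth saying up front that the paper does not prove this statement at all: Theorem \ref{thm:transresult} is quoted from Williams \cite{tghs} as motivation, and the closest thing to a proof in the paper is the combination of Theorem \ref{thm:groupoidresult} with the subsequent proposition showing that condition (c) is automatic for abelian transformation groups. Your ``if'' direction is exactly that route: identify $C^*(H,X)\sidehat$ with $\widehat{S}/G$ (the paper's \cite[Theorem 3.5]{specpaper} step), use openness of the quotient and Hausdorffness of $X/H$ to reduce to two characters in the same fiber, and then exploit abelianness of $H$ via the collapse $h_n\inv s_n h_n = s_n$ together with Proposition \ref{prop:3} --- this is verbatim the paper's argument that condition (c) holds for abelian transformation groupoids, and you correctly identify that collapse as the precise point the groupoid generalization must replace by hypothesis (c). Your ``only if'' direction, however, is genuinely different from what the paper does for its generalization: the paper gets continuity of the stabilizers from \cite[Proposition 3.1]{ctgIII} and Hausdorffness of the orbit space by embedding $G\unit/G$ into $\widehat{S}/G$, whereas you follow Williams' original strategy of manufacturing two distinct limit points of the sequence $\operatorname{Ind}(\varepsilon_{x_n}\otimes 1_{H_{x_n}})$ using compactness of the subgroup space and continuity of induction in the subgroup variable. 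Your route is more self-contained and makes the failure of Hausdorffness concrete; the paper's route is shorter but leans entirely on prior structural results.

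Two caveats. First, the load-bearing step you flag --- that $\operatorname{Ind}(\varepsilon_{x_n}\otimes 1_{H_{x_n}})$ converges to the representation induced from the Fell-limit subgroup $K$, and that every irreducible weakly contained in the limit is a genuine limit point of the (irreducible) $\rho_n$ in the spectrum --- is a real theorem (Fell's continuity of the inducing process, as developed for transformation groups in Williams' earlier work), not something that follows formally; as written it is a citation, not a proof, so your argument is a correct sketch rather than a complete proof. Second, a notational point: you set $S=H\ltimes X$ but then write $\widehat{S}$ for the dual of the \emph{stabilizer} subgroupoid; in the paper's conventions these are different objects, and the homeomorphism you want is $\widehat{S}/G\cong C^*(G)\sidehat$ with $G=H\ltimes X$ acting on the dual stabilizer groupoid. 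Neither issue affects the correctness of the overall structure.
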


\begin{remark} 
The condition that the maps of $H/H_x$ onto $H\cdot x$ are
homeomorphisms for each $x\in X$ is one of the equivalent conditions in the
Mackey-Glimm dichotomy \cite{groupoiddichotomy}.  
Following \cite{specpaper} we will refer to 
groupoids and transformation groups which satisfy one, and hence
all, of the conditions of the Mackey-Glimm dichotomy as {\em  regular}. 
\end{remark}

An important question is how to generalize the hypothesis that the group
$H$ is abelian.  The most natural replacement is to assume 
that the stabilizer subgroups $S_u$ are abelian for all $u\in G\unit$.
Since, as we will see, the regularity hypothesis can be removed
completely, this leaves us with the following conjecture. 

\begin{conj}
\label{conj-1}Suppose the groupoid $G$ has abelian stabilizers.  Then $C^*(G)$ will
have Hausdorff spectrum if and only if the stabilizers vary
continuously and $G\unit/G$ is Hausdorff. 
\end{conj}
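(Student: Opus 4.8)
The plan is to try to prove both implications and to pinpoint where the groupoid setting departs from the transformation-group situation of Theorem~\ref{thm:transresult}.

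For the ``only if'' direction, note first that the hypothesis is quite strong: if $\widehat{C^*(G)}$ is Hausdorff then $C^*(G)$ has continuous trace and in particular is type~I, so by the Mackey--Glimm dichotomy $G$ is automatically regular --- this is how the regularity assumption can be removed. Granting regularity, I would establish each condition by contraposition, adapting the computations of \cite{tghs}. If the stabilizers do not vary continuously then, since $S$ is closed we always have $\limsup S_{u_n}\subseteq S_u$, so the discontinuity must be a failure of lower semicontinuity: there is a sequence $u_n\to u$ and an element $t\in S_u$ which is not the limit of any sequence $(t_n)$ with $t_n\in S_{u_n}$. Choosing characters of the $S_{u_n}$ and of $S_u$ that agree ``at the Fell limit'' but are separated by $t$, and inducing them up to $C^*(G)$, produces a sequence of irreducible representations converging to two distinct limits, contradicting Hausdorffness. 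Similarly, if $G\unit/G$ fails to be Hausdorff one pulls a non-separated pair of orbits back to a non-separated pair of representations of $C^*(G)$ --- for example those induced from the trivial characters of the stabilizers --- again a contradiction.

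For the ``if'' direction I would use the description of the spectrum available once $G$ is regular with continuously varying abelian stabilizers: the Mackey-machine analysis shows the irreducible representations of $C^*(G)$ are induced from the characters of the stabilizer subgroups, identifying $\widehat{C^*(G)}$ with the orbit space $\widehat{S}/G$ for the dual-conjugation action. It then suffices to show $\widehat{S}/G$ is Hausdorff. Since $\widehat{S}$ is second countable, take $[\chi_i]\to[\chi]$ and $[\chi_i]\to[\omega]$ in $\widehat{S}/G$; because $G$ has a Haar system the quotient map $\widehat{S}\to\widehat{S}/G$ is open, so after passing to a subsequence we may lift to $\chi_i\to\chi$ in $\widehat{S}$ and $\gamma_i\cdot\chi_i\to\omega$ in $\widehat{S}$ for suitable $\gamma_i\in G$. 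Writing $\chi_i\in\widehat{S}_{u_i}$ with $u_i=s(\gamma_i)$, continuity of the fibration gives $u_i\to r(\chi)$ and $r(\gamma_i)\to r(\omega)$, while $[u_i]=[r(\gamma_i)]$ in $G\unit/G$; Hausdorffness of $G\unit/G$ then forces $r(\chi)$ and $r(\omega)$ into the same orbit, so after replacing $\omega$ by a translate (and the $\gamma_i$ accordingly) we may assume $\chi$ and $\omega$ both lie in the fiber $\widehat{S}_u$. One now wants to conclude $\chi=\omega$, which would finish the proof.

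The main obstacle --- and, I expect, the reason the conjecture is actually \emph{false} --- is exactly this last step. In the abelian transformation-group case the conjugation action on each fiber is trivial, so $\gamma_i\cdot\chi_i=\chi_i$; then $\chi_i\to\chi$ and $\chi_i\to\omega$, together with the fact that $\widehat{S}$ itself is Hausdorff (Proposition~\ref{prop:3}), immediately give $\chi=\omega$, and no further hypothesis is needed. For a general groupoid the conjugation maps $S_{s(\gamma_i)}\to S_{r(\gamma_i)}$ are nontrivial and genuinely transport characters as one moves along an orbit, so there is no reason $\omega$ should equal $\chi$ merely because they sit over the same unit. I therefore expect that to repair the statement one must add precisely the hypothesis that this cannot happen: whenever $\chi_i\to\chi$ and $\gamma_i\cdot\chi_i\to\omega$ in $\widehat{S}$ with $\chi$ and $\omega$ in the same fiber, then $\chi=\omega$ --- which is the third condition of Theorem~\ref{thm:groupoidresult}. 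Constructing a concrete groupoid in which this fails --- necessarily one whose stabilizer bundle is ``globally nonabelian'' even though every fiber is abelian, so that conjugation acts nontrivially --- is then the natural task that takes the place of a proof of the conjecture.
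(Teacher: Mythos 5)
Your diagnosis is essentially the paper's own: the conjecture is false, the ``only if'' direction does hold, the ``if'' direction reduces (via regularity and the Mackey-machine identification $C^*(G)\sidehat\cong\widehat{S}/G$) to showing $\widehat{S}/G$ is Hausdorff, and the argument breaks at exactly the step you isolate --- concluding $\chi=\omega$ from $\chi_i\to\chi$ and $\gamma_i\cdot\chi_i\to\omega$ with $\chi,\omega$ in the same fiber. That missing implication is precisely condition (c) of Theorem \ref{thm:groupoidresult}, and your observation that it is automatic for abelian transformation groups (because conjugation is then globally trivial) matches the paper's later proposition. Two remarks on the ``only if'' direction: Hausdorff spectrum does \emph{not} imply continuous trace (that requires Fell's condition in addition), but your conclusion survives because Hausdorff implies $T_0$ spectrum, which for a separable algebra gives GCR and hence regularity via the Mackey--Glimm dichotomy; and the paper obtains continuity of the stabilizers by citing a known result rather than reconstructing the separating characters, and gets Hausdorffness of $G\unit/G$ by embedding it into $\widehat{S}/G\cong C^*(G)\sidehat$ rather than by contraposition. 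Both routes are fine.

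The one genuine gap is that refuting the conjecture requires an actual counterexample, and you stop at saying one should exist. The paper builds it (Example \ref{ex:3}): take Green's free-but-not-proper action of $\R$ on $X\subset\R^3$, restrict to a $\Z$-action on a discrete subset $Y$, and let $H=\Q_D\rtimes_\phi\Z$ (with $\phi(n)(r)=r2^n$ and $\Q$ discrete) act through its $\Z$-factor. Then $G=H\ltimes Y$ has constant abelian stabilizers $\Q_D$, $Y/G$ is Hausdorff, but conjugation by $((q,n),x)$ scales characters by $2^{-n}$; along the ``looping'' sequences inherent in Green's example one gets $\chi_i\to\hat{1}$ while $\gamma_i\cdot\chi_i\to\hat{0}$ over the same unit, so $\widehat{S}/G$ is not Hausdorff. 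Note this counterexample is itself a transformation groupoid --- just with a nonabelian acting group --- so the failure is not about leaving the transformation-group setting but, as you correctly sense, about the conjugation action of $G$ on the stabilizer bundle being nontrivial across fibers.
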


However, we will find that this conjecture fails and the assumption that
$G$ has abelian stabilizers is a weaker condition, even for 
transformation groups.
Let us start by assuming $G$ is a second countable, locally
compact Hausdorff groupoid with abelian stabilizers and that
$C^*(G)\sidehat$ is Hausdorff.  It then follows from 
\cite[Proposition 3.1]{ctgIII} 
that the stabilizers must vary continuously.  Next consider the
following useful lemma.  

\begin{lemma}
\label{lem:1}
Suppose $G$ is a second countable locally compact Hausdorff
groupoid with continuously varying abelian stabilizers.  Then the
following are equivalent:
\begin{enumerate}
\item $C^*(G)$ has $T_0$ spectrum.   
\item $C^*(G)$ is GCR.
\item $G\unit/G$ is $T_0$.
\end{enumerate}
Furthermore, if any of these conditions hold then the map $[\gamma]\to
r(\gamma)$ from $G_u/S_u$ to $G\cdot u$ is a homeomorphism for all
$u\in G\unit$ and $G$ is regular.
\end{lemma}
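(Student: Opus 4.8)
The plan is to obtain all three equivalences, together with the final assertion, by assembling standard inputs; the only real work is to check that their hypotheses hold in our setting and to observe that abelian stabilizer groups automatically have type~I group $C^*$-algebras. I would first handle (1)~$\iff$~(2): since $G$ is second countable and carries a Haar system, $C^*(G)$ is separable, and Glimm's theorem states that a separable $C^*$-algebra is GCR (equivalently, type~I) if and only if its spectrum is $T_0$. This step uses nothing about groupoids.

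For (2)~$\iff$~(3) I would invoke the characterization of GCR groupoid $C^*$-algebras: for a second countable locally compact Hausdorff groupoid with a Haar system, $C^*(G)$ is GCR if and only if $G\unit/G$ is $T_0$ and $C^*(S_u)$ is GCR for every $u\in G\unit$. The abelian hypothesis disposes of the second condition for free: each $S_u$ is an abelian locally compact group, so $C^*(S_u)\cong C_0(\widehat{S_u})$ is commutative, hence CCR, hence GCR. Thus for the groupoids under consideration the characterization collapses to ``$C^*(G)$ is GCR $\iff$ $G\unit/G$ is $T_0$'', which is exactly (2)~$\iff$~(3).

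It remains to prove the ``furthermore'' clause, and since (1), (2) and (3) are now equivalent it suffices to argue from (3). As $G$ is a second countable locally compact Hausdorff groupoid with a Haar system, the Mackey--Glimm dichotomy applies to it, and ``$G\unit/G$ is $T_0$'' is one of its equivalent conditions; hence all of them hold and $G$ is regular by the very definition of regularity recalled after Theorem~\ref{thm:transresult}. Another of these equivalent conditions, again as recalled in that remark, is precisely that the natural continuous bijection $[\gamma]\mapsto r(\gamma)$ from $G_u/S_u$ onto $G\cdot u$ is a homeomorphism for each $u\in G\unit$, which is the remaining claim.

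The one point I expect to require care is matching the hypotheses of the GCR characterization of groupoid $C^*$-algebras to the present setting --- in particular, that it concerns the full algebra $C^*(G)$, needs nothing beyond second countability and the existence of a Haar system (so that amenability plays no role), and delivers the ``only if'' implication (2)~$\Rightarrow$~(3) as well as the ``if'' implication. Everything else is either general $C^*$-algebra theory or a direct appeal to the dichotomy.
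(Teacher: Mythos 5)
Your proposal is correct and follows essentially the same route as the paper: Glimm/Pedersen for (a)$\iff$(b) via separability, Clark's characterization of GCR groupoid $C^*$-algebras for (b)$\iff$(c) using that abelian stabilizers have commutative (hence GCR) group $C^*$-algebras, and the Mackey--Glimm dichotomy for the final clause. The only minor quibble is your aside that amenability plays no role in the GCR characterization --- the paper invokes that result precisely by noting the stabilizers are ``abelian, and therefore amenable and GCR,'' suggesting amenability of the stabilizers is among its hypotheses; since abelian groups are amenable this does not affect the validity of your argument.
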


\begin{proof}
The groupoid algebra is separable since $G$ is second countable.  In
this case the equivalence of the first two conditions follows from 
\cite[Theorem 6.8.7]{pedersenauto}.  
Since the stabilizers are abelian, and therefore
amenable and GCR, the equivalence of the second two conditions now
follows from the main result of \cite{ccrgca}.  Finally, if $G\unit/G$ is
$T_0$ then it follows from
\cite{groupoiddichotomy} that the map $[\gamma]\mapsto r(\gamma)$ 
from $G_u/S_u$ onto $G\cdot u$ is a homeomorphism for
all $u\in G\unit$ and hence $G$ is regular in the sense of
\cite{specpaper}. 
\end{proof}

Since we have assumed $C^*(G)\sidehat$ is Hausdorff, 
Lemma \ref{lem:1} implies that $G$ is regular.  We may now 
use \cite[Theorem 3.5]{specpaper} to conclude that
$C^*(G)\sidehat$ is homeomorphic to $\widehat{S}/G$.  A brief
argument shows that $G\unit/G$ is homeomorphic to its image in
$\widehat{S}/G$ equipped with the relative topology.  Thus $G\unit/G$ is
Hausdorff.  This demonstrates one direction of our conjecture.  
On the other hand, suppose that $G$ has continuously varying
abelian stabilizers and that $G\unit/G$ is Hausdorff.  Then $G\unit/G$
is certainly $T_0$ so that $G$ is regular.  It then follows from
\cite[Theorem 3.5]{specpaper} that $C^*(G)\sidehat$ is homeomorphic to
$\widehat{S}/G$.  So we will have proven our conjecture if we can show that
$\widehat{S}/G$ is Hausdorff.  What is more, setting aside the issue
of continuously varying stabilizers for the moment, we also have the
following suggestive proposition. 

\begin{prop}
\label{prop:1}
Suppose $G$ is a second countable, locally compact Hausdorff groupoid
with continuously varying abelian stabilizers.  Then $C^*(G)\sidehat$
is $T_1$ (resp. $T_0$) if and only if $G\unit/G$ is $T_1$
(resp. $T_0$).
\end{prop}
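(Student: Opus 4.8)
The plan is to reduce the whole statement to a question about orbit closures. The $T_0$ half is already contained in Lemma~\ref{lem:1}, so only the $T_1$ assertion needs argument, and in both directions I will exploit that the hypothesis makes $G\unit/G$ at least $T_0$: by Lemma~\ref{lem:1} this forces $G$ to be regular, so \cite[Theorem~3.5]{specpaper} supplies a homeomorphism $C^*(G)\sidehat\cong\widehat{S}/G$. Because the quotient maps $\widehat{S}\to\widehat{S}/G$ and $G\unit\to G\unit/G$ are open (open for any groupoid with a Haar system, by \cite[Lemma~2.1]{groupoidcohom}), each of $\widehat{S}/G$ and $G\unit/G$ is $T_1$ precisely when the corresponding $G$-orbits are closed subsets of $\widehat{S}$, respectively $G\unit$. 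So the proposition comes down to the equivalence: the $G$-orbits in $\widehat{S}$ are closed if and only if the $G$-orbits in $G\unit$ are closed.

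For the ``only if'' direction I would use the canonical map $\iota\colon G\unit\to\widehat{S}$ sending $u$ to the trivial character $1_u$ of $S_u$. Proposition~\ref{prop:3} shows $\iota$ is continuous, it is visibly $G$-equivariant, and $r_{\widehat{S}}\circ\iota=\mathrm{id}$; together these give $\iota\inv(G\cdot 1_u)=G\cdot u$ for each $u$, so closedness of every orbit in $\widehat{S}$ pulls back to closedness of every orbit in $G\unit$.

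The ``if'' direction carries the weight. Assume every orbit in $G\unit$ is closed, fix $\chi\in\widehat{S}_u$, and let $\chi_n\to\psi$ in $\widehat{S}$ with $\chi_n\in G\cdot\chi$; I must find $\gamma$ with $\gamma\cdot\chi=\psi$. Writing $\chi_n=\gamma_n\cdot\chi$ and $u_n=r(\gamma_n)$, Proposition~\ref{prop:3} gives $u_n\to v:=r_{\widehat{S}}(\psi)$, and since each $u_n\in G\cdot u$ and that orbit is closed, $v\in G\cdot u$. Choosing $\gamma$ with $s(\gamma)=u$, $r(\gamma)=v$ and replacing $\chi$ by $\chi'=\gamma\cdot\chi\in\widehat{S}_v$ (which alters neither the orbit nor the problem) reduces me to the case where $\chi'$ and $\psi$ lie in the common fiber $\widehat{S}_v$ and $\chi_n=\delta_n\cdot\chi'$ with $s(\delta_n)=v$, $r(\delta_n)=u_n\to v$; the target is now $\psi=\chi'$. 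Regularity of $G$ identifies the orbit $G\cdot v\subseteq G\unit$, with its relative topology, with the quotient of $\{\delta\in G:s(\delta)=v\}$ by the right $S_v$-action, via $[\delta]\mapsto r(\delta)$ (cf.\ Lemma~\ref{lem:1}). Since a quotient map by a group action is open and $G$ is second countable, the convergence $u_n\to v$ lifts (after passing to a subsequence if necessary) to $\tilde\delta_n\to\mathrm{id}_v$ with $s(\tilde\delta_n)=v$ and $r(\tilde\delta_n)=u_n$. Then $\tilde\delta_n$ and $\delta_n$ differ by an element of $S_v$, and as $S_v$ is abelian it acts trivially on $\widehat{S}_v$ by conjugation, so $\tilde\delta_n\cdot\chi'=\delta_n\cdot\chi'=\chi_n$; continuity of the $G$-action on $\widehat{S}$ (or a direct check with Proposition~\ref{prop:3}) then forces $\chi_n=\tilde\delta_n\cdot\chi'\to\mathrm{id}_v\cdot\chi'=\chi'$, whereupon Hausdorffness of $\widehat{S}$ yields $\psi=\chi'=\gamma\cdot\chi\in G\cdot\chi$.

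I expect the last step to be the real obstacle. The translating elements $\delta_n$ have constant source $v$ but ranges only tending to $v$, so they need not converge, and one cannot directly conclude $\delta_n\cdot\chi'\to\chi'$. The escape uses two of the standing hypotheses together: regularity, to swap $\delta_n$ for a genuinely convergent representative $\tilde\delta_n$ of the same $S_v$-coset, and the abelian-stabilizer assumption, to guarantee that this change of representative leaves the point $\delta_n\cdot\chi'$ untouched. One further thing worth noting is that for the $T_1$ conclusion (unlike the Hausdorff analysis later in the paper) this manoeuvre is only needed when the $\widehat{S}$-fibers $u_n$ already lie in a single $G\unit$-orbit and converge into it, which is exactly what closedness of the $G\unit$-orbits provides.
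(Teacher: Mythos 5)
Your proposal is correct and follows essentially the same route as the paper: both reduce to $\widehat{S}/G$ via Lemma~\ref{lem:1} and \cite[Theorem~3.5]{specpaper}, and the decisive step in both is identical --- use regularity to identify $G_v/S_v$ with $G\cdot v$, lift the convergence $u_n\to v$ through the open quotient $G_v\to G_v/S_v$ to get convergent representatives, and invoke triviality of the abelian $S_v$-action on $\widehat{S}_v$ to conclude. Your reformulation of $T_1$ as closedness of orbits and your use of the trivial-character section $\iota$ for the ``only if'' direction are only cosmetic variations on the paper's case analysis and its embedding of $G\unit/G$ into $\widehat{S}/G$.
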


\begin{proof}
It follows from Lemma \ref{lem:1} that $C^*(G)\sidehat$ is $T_0$ if
and only if $G\unit/G$ is.  Now suppose $C^*(G)\sidehat$ is $T_1$.
Then Lemma \ref{lem:1} and \cite[Theorem
3.5]{specpaper} imply that $C^*(G)\sidehat$ is homeomorphic to
$\widehat{S}/G$.  As noted above, 
$G\unit/G$ is homeomorphic to its image in $\widehat{S}/G$, and as
such $G\unit/G$ is $T_1$.  Next suppose that
$G\unit/G$ is $T_1$.  Again using Lemma \ref{lem:1} and \cite[Theorem
3.5]{specpaper} we have $C^*(G)\sidehat \cong \widehat{S}/G$.  Thus,
mirroring the Hausdorff case, we will be done if we can show that
$\widehat{S}/G$ is $T_1$.  

Suppose that we are given elements $[\rho],[\chi]\in \widehat{S}/G$
such that $[\rho]\ne[\chi]$.   Let $p:\widehat{S}\to G\unit$ be the
bundle map and $\tilde{p}:\widehat{S}/G\to G\unit/G$ its
factorization.  Set $[u] = \tilde{p}([\rho])$ and $[v] =
\tilde{p}([\chi])$.  Suppose $[u]\ne [v]$.  Since $G\unit/G$ is $T_1$
we can find open sets $U$ and $V$ such that $[u]\in U$, $[v]\in V$ and
$[u]\not\in V$, $[v]\not\in U$.  Then $\tilde{p}\inv(U)$ is an open
set containing $[\rho]$ and not $[\chi]$ and $\tilde{p}\inv(V)$ is an
open set containing $[\chi]$ and not $[\rho]$.  Next suppose $[u] =
[v]$.  Since the fibers of $S$ are abelian we have 
\begin{equation}
\label{eq:2}
s \cdot \chi(t) = \chi(s\inv t s) = \chi(t)\quad\text{for all $s\in S$.}
\end{equation}
Hence the action of $G$ on $S$ is trivial when fixed to a single fiber
and we can assume without loss of generality that $\rho,\chi\in
\widehat{S}_u$ with $\rho \ne \chi$.  Let $q:\widehat{S}\to
\widehat{S}/G$ be the quotient map and recall that it is open. Fix a
neighborhood $U$ of $\rho$.  If $\chi\not\in G\cdot U$ then $[\chi]\not
\in q(U)$ and $q(U)$ separates $[\rho]$ from $[\chi]$.  Now suppose
$\chi\in G\cdot U$ for all neighborhoods $U$ of $\rho$.  Then for each $U$
there exists $\gamma_U\in G$ and $\rho_U \in U$ such that $\rho_U =
\gamma_U\cdot \chi$.  If we direct $\rho_U$ by decreasing $U$ then it
is clear that $\rho_U \to \rho$.  This implies that 
\(
\gamma_U \cdot u = r(\gamma_U) = p(\rho_U) \to u. 
\)
Since $G$ is regular $[\gamma]\mapsto r(\gamma)$ is a homeomorphism
and we must have
$[\gamma_U] \to [u]$ in $G_u/S_u$.  However, the quotient map on
$G_u/S_u$ is open so that we may pass to a subnet, relabel, and choose
$r_U\in S_u$ such that $\gamma_Ur_U \to u$.  Using \eqref{eq:2}
\[
\gamma_Ur_U\cdot\chi = \gamma_U\cdot \chi = \rho_U \to u\cdot \chi =
\chi.
\]
Thus $\rho = \chi$, which is a
contradiction.  It follows that we must have been able to separate
$[\rho]$ from $[\chi]$.  This argument is completely symmetric so that
we can also find an open set around $[\chi]$ which does not contain
$[\rho]$.  It follows that $\widehat{S}/G$, and hence
$C^*(G)\sidehat$, is $T_1$. 
\end{proof}  

The essential component of this proof is the argument 
that $\widehat{S}/G$ is $T_1$
if $G\unit/G$ is $T_1$.  We would like to extend this to the Hausdorff
case but there are
topological obstructions.  We start by recalling Green's famous
example of a free group action that is not proper.

\begin{example}[\cite{tgasos}]
\label{ex:1}
The space $X\subset\R^3$ will consist of countably many orbits, with
the points $x_0=(0,0,0)$ and $x_n = (2^{-2n},0,0)$ for $n\in\N$
as a family of representatives.  The action of $\R$ on $X$ is
described by defining maps $\phi_n :\R\rightarrow X$ such that
$\phi_n(s) = s\cdot x_n$.  In particular we let
\(
\phi_0(s) = (0,s,0) 
\)
and for $n\geq 1$
\[
\phi_n(s) = \begin{cases} (2^{-2n},s,0) & s \leq n \\
(2^{-2n}-(s-n)2^{-2n-1}, n\cos(\pi(s-n)),n\sin(\pi(s-n))) & n < s <
n+1 \\
(2^{-2n-1}, s-1-2n,0) & s \geq n+1.
\end{cases}
\]
For instance, brief computations show that 
\begin{equation}
\label{eq:3}
2n+1\cdot (2^{-2n},0,0) = (2^{-2n-1},0,0)
\end{equation}
for all $n$.   It is straightforward to observe that
the orbit space $X/\R$ is homeomorphic to the subset
$\{x_n\}_{n=0}^\infty$ of $\R^3$.  
\end{example}

In the following we build an example of a transformation groupoid $G$
with continuously varying abelian stabilizers such that $G\unit/G$ is
Hausdorff and $\widehat{S}/G$ is not.  This shows that, even in the
transformation group case,  our conjecture
fails and that we cannot use the straightforward 
generalization of Theorem \ref{thm:transresult}.  

\begin{example}
\label{ex:3}
Let $\R$ act on $X$ as in Example \ref{ex:1}.  Now restrict this
action to the action of $\Z$ on the subset 
\(
Y= \{\phi_n(m):n\in \N, m\in \Z\}.
\)
Let $H = \Q_D\rtimes_\phi\Z$ be
the semidirect product, where $\Q_D$ denotes the rationals equipped
with the discrete topology and where we define 
\begin{equation}
\phi(n)(r) = r2^n
\end{equation}
for all $n\in \Z$ and $r\in \Q$.  It is easy to show that $\phi$
is a homomorphism from $\Z$ into the automorphism group of $\Q_D$.
Thus $H$ is a locally compact Hausdorff group which is second
countable because it is a countable discrete space.  Recall that the
group operations are given by 
\begin{align*}
(q,n)(p,m) & = (q + 2^n p, n + m) & (q,n)\inv &= (-2^{-n}q,-n).
\end{align*}
Let the second factor of
$H$ act on $Y$ as in Example \ref{ex:1}.  In other words, let
$(q,n)\cdot x := n \cdot x$.  It is straightforward to show that this
is a continuous group action.  It follows that the transformation
groupoid $G=H\ltimes Y$ is a second countable,
locally compact Hausdorff groupoid with a Haar system.  
Furthermore, the stabilizer subgroup of $H$ at $x$ is $H_x =
\{(q,0):q\in\Q\}$ for all $x\in Y$.  Since 
\(
(q,0)(r,0) = (q + r2^{0},0) = (q+r,0),
\)
the stabilizers are abelian, and since the stabilizers are also
constant, they must vary continuously in both $H$ and $G$.
It will be important for us to observe that $S$ is isomorphic to $\Q_D
\times Y$ via the map \(((q,0),x)\mapsto (q,x)\).  Finally,
$\{x_n\}_{n=0}^\infty$ forms a set of representatives for the orbit
space and it is not difficult to show that $Y/G$ is actually
homeomorphic to $\{x_n\}_{n=0}^\infty$ and is therefore Hausdorff.

To show that $\widehat{S}/G$ is not Hausdorff we must first
compute the dual.  Since $S$ is isomorphic to $\Q_D\times Y$ we can
identify $\widehat{S}$ with $\widehat{\Q_D}\times Y$.  While
$\widehat{\Q_D}$ is fairly mysterious we do know that since
\(
\hat{r}(s) = e^{ i r s}
\)
is a character on $\R$ for all $r\in \R$ it must also be a character
on $\Q_D$.  Now suppose $((q,n),x)\in G$ and
$(\hat{r},-n\cdot x)\in \Q_D\times Y$.  We have 
\begin{align*}
((q,n),x)\cdot (\hat{r},-n\cdot x)(p,x) &=
(\hat{r},-n\cdot x)(((q,n),x)\inv ((p,0),x)((q,n),x)) \\
&=(\hat{r},-n\cdot x)((-2^{-n}q,-n)(p,0)(q,n),-n\cdot x) \\
&=(\hat{r},-n\cdot x)((2^{-n}p,0),-n\cdot x) \\
&= e^{irp2^{-n}} = (\widehat{2^{-n}r},x)(p,x).
\end{align*}
Or, more succinctly, 
\begin{equation}
\label{eq:1}
((q,n),x) \cdot (\hat{r},-n\cdot x) = (\widehat{2^{-n}r},x).
\end{equation}
Next let \( \gamma_n = ((0,2n+1),(2^{-2n-1},0,0))\)
for all $n$. Using the inverse of \eqref{eq:3} we have 
\[
r(\gamma_n) = (2^{-2n-1},0,0)\quad\text{and}\quad
s(\gamma_n) = (2^{-2n},0,0).
\]
If we set \(\chi_n = (\hat{1},(2^{-2n},0,0))\) then clearly
 $\chi_n \to \chi = (\hat{1},(0,0,0))$.  Using 
\eqref{eq:1} we compute
\(
\gamma_n\cdot\chi_n = (\widehat{2^{-(2n+1)}},(2^{-2n-1},0,0)).
\)
A quick calculation shows that $\gamma_n\cdot\chi_n \to \omega =
(\hat{0},(0,0,0))$.  Hence $[\chi_n] \to [\chi]$ and $[\chi_n]\to
[\omega]$.  Since the action of $G$ is trivial on
fixed fibers this implies that $\widehat{S}/G$, and hence
$C^*(G)\sidehat$, is not Hausdorff.  
\end{example}

Even though our conjecture fails, 
we still know that if $G$ has continuously varying
abelian stabilizers and $G\unit/G$ is Hausdorff then
$C^*(G)\sidehat\cong \widehat{S}/G$.  What we need is an additional
hypothesis which, when taken in
conjunction with $G\unit/G$ being Hausdorff, will imply that
$\widehat{S}/G$ is Hausdorff.  The appropriate condition is given
below and forms the main result of the paper. 

\begin{theorem}
\label{thm:groupoidresult}
Suppose $G$ is a second countable locally compact Hausdorff groupoid
with a Haar system and abelian stabilizers.  Then $C^*(G)$ has
Hausdorff spectrum if and only if the following conditions hold:
\begin{enumerate}
\item the stabilizers vary continuously, i.e. $u \mapsto S_u$ is
  continuous with respect to the Fell topology, 
\item the orbit space $G\unit /G$ is Hausdorff, and,
\item given sequences $\{\chi_i\}\subset \widehat{S}$ and $\{\gamma_i\}\subset
  G$ with $\chi_i \in \widehat{S}_{s(\gamma_i)}$,  
  if $\chi_i\to \chi$ and $\gamma_i\cdot \chi_i \to \omega$
  such that $\chi$ and $\omega$ are in the same fiber then $\chi = \omega$.  
\end{enumerate}
\end{theorem}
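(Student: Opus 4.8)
The plan is to reduce everything to the single assertion that, under hypotheses (a) and (b), condition (c) is equivalent to $\widehat S/G$ being Hausdorff; the theorem then follows from the machinery already assembled in the excerpt. For the forward direction, assume $C^*(G)\sidehat$ is Hausdorff. As observed just before Lemma~\ref{lem:1}, \cite[Proposition 3.1]{ctgIII} forces the stabilizers to vary continuously, giving (a). Lemma~\ref{lem:1} then shows $G$ is regular, so \cite[Theorem 3.5]{specpaper} gives $C^*(G)\sidehat\cong\widehat S/G$; since $G\unit/G$ embeds in $\widehat S/G$ with the relative topology and is therefore Hausdorff, we get (b). For (c), suppose $\chi_i\to\chi$ and $\gamma_i\cdot\chi_i\to\omega$ with $\chi,\omega$ in the same fiber $\widehat S_u$. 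Because the $G$-action on a fixed fiber of $\widehat S$ is trivial (equation~\eqref{eq:2}, valid for $\widehat S$ by the same abelian computation), $[\chi]=[\gamma_i\cdot\chi_i]$, so in $\widehat S/G$ the constant sequence $[\gamma_i\cdot\chi_i]$ converges to both $[\chi]$ and $[\omega]$; Hausdorffness gives $[\chi]=[\omega]$, i.e.\ $\omega=\eta\cdot\chi$ for some $\eta\in G$, and since $\chi,\omega$ lie in the same fiber and the action there is trivial, $\chi=\omega$.

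For the converse, assume (a), (b), (c). Then $G\unit/G$ is $T_0$, so $G$ is regular by Lemma~\ref{lem:1}, and \cite[Theorem 3.5]{specpaper} yields $C^*(G)\sidehat\cong\widehat S/G$; it remains to prove $\widehat S/G$ is Hausdorff. Given $[\rho]\neq[\chi]$ in $\widehat S/G$, let $\tilde p:\widehat S/G\to G\unit/G$ be the factored bundle map. If $\tilde p([\rho])\neq\tilde p([\chi])$, pull back disjoint neighborhoods from the Hausdorff space $G\unit/G$, exactly as in the $T_1$ argument of Proposition~\ref{prop:1}. So assume both project to the same $[u]$; using triviality of the action on fibers (equation~\eqref{eq:2}) we may take $\rho,\chi\in\widehat S_u$ with $\rho\neq\chi$. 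The quotient map $q:\widehat S\to\widehat S/G$ is open (Haar system), so suppose toward a contradiction that every neighborhood $U$ of $\rho$ and every neighborhood $V$ of $\chi$ satisfy $G\cdot U\cap G\cdot V\neq\emptyset$. Working with a countable neighborhood basis (second countability) we extract sequences $\rho_i\to\rho$, $\chi_i\to\chi$ and $\gamma_i,\eta_i\in G$ with $\gamma_i\cdot\rho_i=\eta_i\cdot\chi_i$ for all $i$.

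The heart of the matter is then to massage these sequences into the exact shape required by hypothesis~(c). Replacing $\gamma_i\cdot\rho_i=\eta_i\cdot\chi_i$ by $\delta_i\cdot\rho_i=\chi_i$ with $\delta_i=\eta_i\inv\gamma_i$ (legitimate after passing to composable ranges), one has $\rho_i\to\rho$ and, on the other side, $\delta_i\inv\cdot\chi_i=\rho_i\to\rho$ while $\chi_i\to\chi$; I would like to apply (c) to conclude $\rho=\chi$ once I know $\rho$ and $\chi$ are in the same fiber, which they are. The genuine obstacle — and the step to be carried out carefully — is controlling the base points: from $\rho_i\to\rho$ and $\chi_i\to\chi$ we get $s(\delta_i)=p(\chi_i)\to u$ and $r(\delta_i)=p(\rho_i)\to u$, so in $\widehat S/G$ we have $[\chi_i]=[\delta_i\cdot\rho_i]$ converging to $[u]$ in base; since $G$ is regular, $[\gamma]\mapsto r(\gamma)$ is a homeomorphism $G_u/S_u\to G\cdot u$, the orbit quotient map is open, and one may pass to a subnet and adjust $\delta_i$ by elements of $S_u$ (which act trivially on $\widehat S_u$, again by~\eqref{eq:2}) so that $\delta_i\to u$ in $G$; this is precisely the subnet argument already rehearsed at the end of Proposition~\ref{prop:1}. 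Then $\delta_i\cdot\rho_i\to u\cdot\rho=\rho$ forces $\chi=\rho$, the desired contradiction. The symmetric statement separates $[\chi]$ from $[\rho]$, so $\widehat S/G$ is Hausdorff, completing the proof. The one subtlety to watch is that $\rho$ and $\chi$ lie in the \emph{same} fiber after the reduction, so that hypothesis~(c) — which only constrains pairs in a common fiber — actually applies; this is exactly why (c) is the right condition and why Conjecture~\ref{conj-1} (which omits it) fails, as Example~\ref{ex:3} shows.
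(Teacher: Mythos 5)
Your overall architecture is the paper's: both directions hinge on the homeomorphism $C^*(G)\sidehat\cong\widehat{S}/G$ supplied by (a), (b), Lemma~\ref{lem:1} and \cite[Theorem 3.5]{specpaper}, and your forward direction is the paper's argument essentially verbatim (modulo the garbled line ``$[\chi]=[\gamma_i\cdot\chi_i]$,'' which should read $[\chi_i]=[\gamma_i\cdot\chi_i]$; that sequence is not constant, but it converges to both $[\chi]$ and $[\omega]$, which is all you need). In the converse the paper argues by uniqueness of sequential limits in the second countable space $\widehat{S}/G$: lift $[\chi_i]\to[\chi]$ and $[\chi_i]\to[\omega]$ along the open quotient map, use Hausdorffness of $G\unit/G$ to put $\chi$ and $\omega$ in one fiber, and invoke (c). Your neighborhood-separation version is an equivalent repackaging and is fine through the extraction of $\rho_i\to\rho$, $\chi_i\to\chi$ and $\delta_i$ with $\delta_i\cdot\rho_i=\chi_i$.

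The problem is the paragraph you call ``the heart of the matter.'' Once you have $\rho_i\to\rho$ and $\delta_i\cdot\rho_i=\chi_i\to\chi$ with $\rho,\chi\in\widehat{S}_u$, hypothesis (c) applies \emph{exactly as stated}: it imposes no condition on the $\gamma_i$ beyond $\chi_i\in\widehat{S}_{s(\gamma_i)}$ and only requires the two \emph{limits} to share a fiber, which they do. So $\rho=\chi$ follows at once and the contradiction is complete; there are no base points to control. The subnet argument you then run is not only superfluous but incorrect: in Proposition~\ref{prop:1} the homeomorphism $[\gamma]\mapsto r(\gamma)$ from $G_u/S_u$ onto $G\cdot u$ is applied to elements $\gamma_U$ that genuinely lie in $G_u$, because the character being translated there is the fixed $\chi\in\widehat{S}_u$. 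Your $\delta_i$ only satisfy $s(\delta_i)\to u$ and $r(\delta_i)\to u$; they need not lie in $G_u$, and regularity gives no way to force $\delta_i\to u$ after adjusting by $S_u$. Indeed, if that step worked you would have proved $\widehat{S}/G$ Hausdorff from (a) and (b) alone, i.e.\ the naive conjecture, which Example~\ref{ex:3} refutes --- the failure of exactly this subnet maneuver off a fixed fiber is the reason condition (c) exists. Delete that paragraph and your proof is correct and coincides in substance with the paper's.
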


In essence the third condition prevents the kind of ``looping''
behavior we see in Example \ref{ex:3} and is enough to guarantee
that $\widehat{S}/G$ is Hausdorff.  

\begin{remark}
Even in the case of transformation groups Theorem
\ref{thm:groupoidresult} is in some ways stronger than Theorem
\ref{thm:transresult}.  The main advantage is that we only require the
stabilizer groups to be abelian, and not the whole group.
Furthermore, we also removed the regularity hypothesis.  
The price is that we have added a slightly technical
condition that, while not easy to say, is simple enough to check in
practice.  
\end{remark}

\begin{proof}
In the discussion following our conjecture at the beginning of the
section on page \pageref{conj-1}  we showed that if
$C^*(G)\sidehat$ is Hausdorff then conditions (a) and
(b) hold and that $\widehat{S}/G$ is Hausdorff.   Now suppose we have
$\chi_i \to \chi$ and $\gamma_i\cdot \chi_i \to \omega$ as in
condition (c).  Then $[\chi_i]\to [\chi]$ and
$[\chi_i]\to[\omega]$.  Since $\widehat{S}/G$ is Hausdorff this
implies $[\omega] = [\chi]$.  However, 
$\chi$ and $\omega$ live in the same fiber and the action of $G$ on
a single fixed fiber is free so that $\chi = \omega$. 

Now suppose conditions (a)-(c) are satisfied.  Then again following
the discussion on page \pageref{conj-1}, the first two
conditions imply that $C^*(G)\sidehat$ is homeomorphic to
$\widehat{S}/G$.  Now suppose $[\chi_i]\to [\chi]$ and $[\chi_i]\to
[\omega]$ in $\widehat{S}/G$.  Using the fact that the quotient map is
open we can pass to a subsequence, relabel, and choose new representatives
$\chi_i$ so that $\chi_i \to \chi$.  As before let $p:\widehat{S}\to
G\unit$ be the bundle map and let $\tilde{p}:\widehat{S}/G \to
G\unit/G$ be the natural factorization.  Define $u_i = p(\chi_i)$ and
$u = p(\chi)$ and observe that $[u_i] \to [u]$.  Furthermore if
$p(\omega) = v$ then $[u_i] \to [v]$ as well.  Since
$G\unit/G$ is Hausdorff we have $[u] = [v]$ and we may assume, without
loss of generality, that $u = v$.  Now pass to a subsequence again,
relabel, and find $\gamma_i\in G$ 
such that $\gamma_i \cdot \chi_i \to \omega$.   
These sequences satisfy the hypothesis of (c) so $\omega = \chi$.  
It follows $[\omega] = [\chi]$ and that
$\widehat{S}/G$, and hence $C^*(G)\sidehat$, is Hausdorff.  
\end{proof}

It should be noted that there are a variety of situations in which
condition (c) is guaranteed to hold. 

\begin{prop}
Let $G$ be a second countable, locally compact Hausdorff groupoid with
continuously varying abelian stabilizers.  Then
condition (c) of Theorem \ref{thm:groupoidresult} automatically holds
if $G$ satisfies any of the following:
\begin{enumerate}
\item $G=H\ltimes X$ is an abelian transformation groupoid,  
\item $G$ is principal,
\item $G$ is proper,
\item $G$ is Cartan, or
\item $G$ is transitive. 
\end{enumerate}
\end{prop}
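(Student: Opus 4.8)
My plan is to check condition (c) separately for each of the five classes, using three distinct ideas. Throughout, given sequences $\{\chi_i\}\subseteq\widehat S$ and $\{\gamma_i\}\subseteq G$ as in (c), I write $p\colon\widehat S\to G\unit$ for the bundle map, set $u_i=s(\gamma_i)=p(\chi_i)$ and $v_i=r(\gamma_i)=p(\gamma_i\cdot\chi_i)$, and let $u$ be the common base point of $\chi$ and $\omega$; Proposition~\ref{prop:3} then gives $u_i\to u$ and $v_i\to u$. Case~(b) is immediate: if $G$ is principal then $S=G\unit$, so $\widehat S=G\unit$, every fibre $\widehat S_w$ is a single point, and the hypothesis that $\chi,\omega$ lie in the same fibre already forces $\chi=\omega$. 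Cases~(c) and~(d) I would handle together by a compactness argument; cases~(a) and~(e) by exhibiting a $G$-invariant ``coordinatization'' of $\widehat S$ special to those settings.

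For (c) and (d): a proper groupoid is Cartan --- if $U$ is a relatively compact open neighbourhood of a unit then $\{\gamma:r(\gamma)\in U,\ s(\gamma)\in U\}\subseteq(r,s)\inv(\overline U\times\overline U)$ is relatively compact --- so it suffices to treat the Cartan case. Pick a wandering neighbourhood $U$ of $u$, i.e.\ one for which $\{\gamma:r(\gamma),s(\gamma)\in U\}$ is relatively compact. Since $u_i\to u$ and $v_i\to u$, the $\gamma_i$ eventually lie in this set, and as $G$ is second countable we may pass to a subsequence with $\gamma_i\to\gamma$; then $r(\gamma)=\lim v_i=u$ and $s(\gamma)=\lim u_i=u$, so $\gamma\in S_u$. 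Because $s(\gamma_i)=p(\chi_i)$ for all $i$ and $s(\gamma)=u=p(\chi)$, continuity of the $G$-action on $\widehat S$ gives $\gamma_i\cdot\chi_i\to\gamma\cdot\chi$, whence $\omega=\gamma\cdot\chi$ by Hausdorffness of $\widehat S$; and since $\gamma\in S_u$ and $S_u$ is abelian, $\gamma\cdot\chi(t)=\chi(\gamma\inv t\gamma)=\chi(t)$ for all $t\in S_u$, i.e.\ $\gamma\cdot\chi=\chi$. Hence $\chi=\omega$.

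For (e): fix $u_0\in G\unit$ and put $H=S_{u_0}$, an abelian group. I would construct a $G$-invariant map $\Phi\colon\widehat S\to\widehat H$: the structure theory of transitive second countable locally compact groupoids with a Haar system presents $G_{u_0}=s\inv(u_0)$ as a principal $H$-bundle over $G\unit$ via $r$, with continuous local sections $\sigma\colon U\to G_{u_0}$, $r\circ\sigma=\mathrm{id}$, and one sets $\Phi(\chi)=\sigma(p(\chi))\inv\cdot\chi\in\widehat S_{u_0}=\widehat H$ for $\chi\in p\inv(U)$. Because $H$ is abelian, two sections differ by the action of an element of $H$, which is trivial on $\widehat S_{u_0}$, so $\Phi$ is well defined and continuous on all of $\widehat S$; it is $G$-invariant (replace $\sigma(p(\gamma\cdot\chi))$ by $\gamma\,\sigma(p(\chi))$), and on each fibre $\widehat S_w$ it is the Pontryagin dual of the isomorphism $h\mapsto\sigma(w)h\sigma(w)\inv$ of $H$ onto $S_w$, hence injective there. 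Then $\Phi(\chi_i)\to\Phi(\chi)$, while $\Phi(\gamma_i\cdot\chi_i)=\Phi(\chi_i)\to\Phi(\omega)$, so $\Phi(\chi)=\Phi(\omega)$ as $\widehat H$ is Hausdorff, and injectivity of $\Phi$ on the common fibre of $\chi,\omega$ gives $\chi=\omega$. (Alternatively, for transitive $G$ one has $C^*(G)\cong C^*(H)\otimes\mathcal K$, so $C^*(G)\sidehat\cong\widehat H$ is already Hausdorff and (c) then follows from Theorem~\ref{thm:groupoidresult} itself.)

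For (a): write $G=H\ltimes X$ with $H$ abelian, so $S=\{(h,x):h\in H_x\}$ and $S_x=H_x\times\{x\}$; a one-line computation, using that $H$ is abelian, shows $\gamma\inv(h,g\cdot x)\gamma=(g\inv hg,x)=(h,x)$ when $\gamma=(g,x)$. Fix $\sigma=(h,u)\in S_u$. Since the stabilizers vary continuously, there are $\sigma_i=(h_i,v_i)\in S_{v_i}$ with $\sigma_i\to\sigma$, so $h_i\to h$; then $(\gamma_i\cdot\chi_i)(\sigma_i)=\chi_i(\gamma_i\inv\sigma_i\gamma_i)=\chi_i((h_i,u_i))$ with $(h_i,u_i)\to(h,u)=\sigma$ in $S$. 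Applying Proposition~\ref{prop:3}(2) to $\chi_i\to\chi$ gives $\chi_i((h_i,u_i))\to\chi(\sigma)$, and applying it to $\gamma_i\cdot\chi_i\to\omega$ gives $(\gamma_i\cdot\chi_i)(\sigma_i)\to\omega(\sigma)$; hence $\chi(\sigma)=\omega(\sigma)$, and as $\sigma\in S_u$ was arbitrary, $\chi=\omega$. The only genuinely non-elementary ingredient in the whole proposition is the transitive case, where one must invoke the structure theory of transitive groupoids (the local triviality of $G_{u_0}\to G\unit$, or equivalently the decomposition $C^*(G)\cong C^*(H)\otimes\mathcal K$); everything else reduces to Proposition~\ref{prop:3}, the Hausdorffness of $\widehat S$ or $\widehat H$, and the triviality of the isotropy action on a single fibre.
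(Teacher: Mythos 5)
Your arguments for cases (a)--(d) are correct and essentially the paper's own: (b) is immediate; (c) and (d) extract a convergent subsequence $\gamma_i\to\gamma\in S_u$ from relative compactness and then use that the isotropy action on a fixed fibre of $\widehat{S}$ is trivial; and (a) transports a convergent sequence of stabilizer elements between the $u_i$- and $v_i$-fibres (legitimate because $H$ abelian forces $H_{u_i}=H_{v_i}$) and applies Proposition \ref{prop:3} twice. These are the same computations the paper performs, up to reordering.

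The transitive case is where you diverge, and your primary argument there has a gap. You assert that $r\colon G_{u_0}\to G\unit$ is a principal $S_{u_0}$-bundle \emph{with continuous local sections}. What the structure theory actually supplies for a second countable, locally compact, transitive groupoid with Haar system (\cite[Theorem 2.2]{groupoidequiv}) is only that $(r,s)$, and hence $r|_{G_{u_0}}$, is an \emph{open} surjection; local triviality of this bundle is a strictly stronger property (this is the separate notion of a ``locally trivial groupoid'') and does not follow from openness for general locally compact groups and groupoids, so your map $\Phi$ is not known to be definable. The paper's proof needs only openness: it lifts the convergent sequence $(v_i,u_i)\to(u,u)$ through $(r,s)$ to a sequence $\eta_i\to u$ with $r(\eta_i)=v_i$ and $s(\eta_i)=u_i$ (after passing to a subsequence), observes that $\eta_i\inv\gamma_i\in S_{u_i}$ acts trivially on $\widehat{S}_{u_i}$, and concludes $\gamma_i\cdot\chi_i=\eta_i\cdot\chi_i\to u\cdot\chi=\chi$. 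Your parenthetical fallback --- that $C^*(G)\cong C^*(S_{u_0})\otimes\mathcal{K}$ for such $G$, so the spectrum is the Hausdorff space $\widehat{S_{u_0}}$ and condition (c) follows from the already-established forward implication of Theorem \ref{thm:groupoidresult} --- is sound and does rescue the case, at the cost of invoking the imprimitivity machinery rather than the elementary lifting argument.
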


\begin{proof}
Let $\chi_i \to \chi$ and $\gamma_i\cdot\chi_i \to \omega$ be as in
condition (c).  Set $u_i = s(\gamma_i)$, $v_i = r(\gamma_i)$,  $u =
p(\chi) = p(\omega)$ and observe that $u_i \to u$ and $v_i \to u$.
Now suppose $G = H\ltimes X$ where $H$ is abelian.  Then we must have
$\gamma_i = (t_i, v_i)$ with $u_i = t_i\inv\cdot v_i$.  Given $s$ in
the stabilizer subgroup $H_u$ 
we can use the fact that the stabilizers vary continuously to
pass to a subsequence, relabel, and find $s_i\in H_{u_i}$ such
that $s_i \to s$ in $H$.  
Consequently $(s_i,u_i) \to (s,u)$ and by Proposition \ref{prop:3}
\(
\chi_i(s_i,u_i) \to \chi(s,u).
\)
On the other hand, since the group is
abelian, we also have $s_i \in H_{v_i} = H_{t_i \cdot u_i}$ for all
$i$.  It follows that $(s_i,v_i)\to (s,u)$ in $S$ and therefore 
\[
(t_i,v_i)\cdot \chi_i(s_i,v_i) =
 \chi_i(t_i\inv s_i t_i, u_i) = \chi_i(s_i,u_i) 
\to \omega(s,u).
\]
Hence $\chi = \omega$ and condition (c) automatically holds for
abelian transformation groups.  

Moving on, condition (c) trivially holds if $G$ is principal.  
For the next two conditions observe the following.  Suppose we can
pass to a subsequence, relabel, and find $\gamma \in G$ such that
$\gamma_i \to \gamma$.  It follows that $\gamma_i \cdot \chi_i \to
\gamma \cdot \chi$ and therefore $\gamma\cdot \chi = \omega$.  
However, the range and source maps are continuous so
we must have $r(\gamma) = s(\gamma) = u$ and hence $\gamma\in S_u$.
The fibres of $S$ are abelian so that by \eqref{eq:2} $\omega =
\gamma\cdot \chi = \chi$.  Thus it will suffice to show that we can
prove $\gamma_i$ has a convergent subsequence.  However, if $G$ is either
proper or Cartan then this follows almost by definition.  

Finally, suppose $G$ is transitive.  
Since $G$ is also second countable \cite[Theorem
2.2]{groupoidequiv} implies that the map $\gamma \mapsto
(r(\gamma),s(\gamma))$ is open.  Thus we can pass to a subsequence,
relabel, and find $\eta_i \in G$ such that $r(\eta_i) = v_i$,
$s(\eta_i) = u_i$ and $\eta_i \to u$.  Observe that $\eta_i \inv
\gamma_i \in S_{u_i}$ for all $i$ so that
\(
\gamma_i \cdot \chi_i = \eta_i \cdot (\eta_i\inv \gamma_i \cdot
\chi_i) = \eta_i \cdot \chi_i.
\)
Thus $\gamma_i \cdot \chi_i = \eta_i \cdot \chi_i \to
u\cdot \chi = \chi$.  It follows that $\chi = \omega$ and condition
(c) holds in this case as well. 
\end{proof}

\section{Examples and Duality}
\label{sec:hausd-spectr-dual}

In this section we would like to begin by applying Theorem
\ref{thm:groupoidresult} to several examples.  

\begin{example}
Let $H = SO(3,\R)$, $X = \R^3\setminus\{(0,0,0)\}$ and let $H$ act on
$X$ by rotation.  It is clear that $H$ is not abelian, and therefore
we cannot apply Theorem \ref{thm:transresult}.  However, it does have
abelian stabilizer subgroups.  
Given a vector $v\in X$ it's easy to see that $S_v$ is
the set of rotations about the line described by $v$.  
In particular, this is isomorphic 
to the circle group and is therefore
abelian.  What is more, some computations show that the stabilizers
vary continuously and that the stabilizer
subgroupoid $S$ is homeomorphic to $X\times \mathbb{T}$.  This in turn
implies that the dual groupoid is homeomorphic to $X\times
\mathbb{Z}$.  Now suppose $(U_i\inv v_i,\chi_i)\to (v,\chi)$ and
$(U_i,v_i)\cdot (U_i\inv v_i,\chi_i)\to (v,\omega)$ as in condition
(c).  Given $\theta_i\to \theta$ in $\T$ we have from
Proposition \ref{prop:3} that 
\[
(U_i\inv v_i,\chi_i)(U_i\inv v_i,\theta_i) = \chi_i(\theta_i) \to 
(v,\chi)(v,\theta) = \chi(\theta).
\]
Using the fact that conjugating rotation about an axis $w$ by $V\in H$
gives us the corresponding rotation about $Vw$,  we also have 
\begin{align*}
(U_i, v_i)\cdot(U_i\inv v_i, \chi_i)(v_i,\theta_i) 
&=(U_i\inv v_i,\chi_i)(U_i\inv v_i,\theta_i) 
=\chi_i(\theta_i) \to 
(v,\omega)(v,\theta) = \omega(\theta).
\end{align*}
It follows that $\chi = \omega$ and condition (c) of 
Theorem \ref{thm:groupoidresult} holds. 
Finally, the orbit space $X/H$ is homeomorphic to
the open half-line and is therefore Hausdorff.  Thus we can conclude
that $C^*(H\ltimes X)$ has Hausdorff spectrum.  In fact 
\cite[Theorem 3.5]{specpaper} shows that
$C^*(H\ltimes X)$ is homeomorphic to $\widehat{S}/H\ltimes X = (0,\infty)\times
\mathbb{Z}$.  
\end{example}

\begin{example}
Let $E$ be a row finite directed graph with no sources. 
Recall that we can build the graph groupoid $G$ as in \cite{graphgroupoid}.  
Elements of $G$ are triples $(x,n,y)$ where $x$
and $y$ are infinite paths which are shift equivalent with lag $n$,
and elements of $G\unit$ are infinite paths.\footnote{We will be using the
  Raeburn convention for path composition \cite{cbmsgraph}.}
Furthermore, the groupoid $C^*$-algebra $C^*(G)$ is isomorphic to the graph
$C^*$-algebra.  Let us consider the conditions of Theorem
\ref{thm:groupoidresult}.  First, the stabilizers are all subgroups of
$\Z$ and hence abelian.  Furthermore, the groupoid $G$ will have nontrivial
stabilizers if and only if there exists an infinite path which is
shift equivalent to itself.  In other words, if and only if there is a
cycle.  Suppose a cycle on the graph has an entry.  Let $x$ be the
path created by following the cycle an infinite number of times. 
  For each $i\in
\N$ let $x_i$ be the path which, at its head, follows the cycle $i$
times and then has a non-cyclic tail leading off from the entry.   Because $x_i$
eventually agrees with $x$ on any finite segment we have $x_i \to x$.
However none of the $x_i$ are cycles so that $S_{x_i}$ is
trivial for all $i$.  On the other hand $S_{x} \cong n\mathbb{Z}$
where $n$ is the length of the cycle.  Thus the stabilizers do not
vary continuously.  This shows that in order for the stabilizers to
vary continuously no cycles in the graph can have entries.  A similar
argument shows that the converse holds as well.  

For the second condition we require that the orbit space $G\unit/G$ be
Hausdorff.  In this case the orbit space is the space of shift
equivalence classes.  Recall that the basic open sets in $G\unit$ are
the cylinder sets $V_a$.  More specifically, $a$ is a finite path and
$V_a$ is the set of all infinite paths which are initially equal to
$a$.   Given $[x]\in G\unit/G$ we will have $x\in G\cdot V_a$ if and only
if $x$ is shift equivalent to a path with initial segment $a$.  This
is equivalent to there being a path from any vertex on $x$ to the
source of $a$.  Conversely, $y\not\in G\cdot V_a$ if and only if there is
no path from any vertex on $y$ to the source of $a$.  Using these
facts it follows from a brief argument that $G\unit/G$ will be
Hausdorff if and only if given non-shift equivalent paths $x$ and $y$
there exists vertices $u$ and $v$ such that there is a path from a
vertex on $x$ to $u$, a path from a vertex on $y$ to $v$, and there is
no vertex $w$ which has a path to both $u$ and $v$.  

Finally, for the third condition we observe that given $(y,n,x)\in G$,
$(y,m,y)\in S$ and $\chi \in \widehat{S}_x$ we have 
\begin{equation}
\label{eq:5}
(y,n,x)\cdot \chi(y,m,y) = \chi((x,-n,y) (y,m,y) (y,n,x)) =
\chi(x,m,x).
\end{equation}
Now suppose $\chi_i\to \chi$ and $(y_i,n_i,x_i)\cdot \chi_i\to \omega$
in $\widehat{S}$ with $\chi,\omega \in \widehat{S}_x$. 
Notice that this implies that we must have $x_i\to x$
and $y_i\to x$ in $G\unit$.  Let $(x,n,x)\in S_x$.  Then
$(x_i,n,x_i) \to (x,n,x)$ and by Proposition \ref{prop:3}
\(
\chi_i(x_i,n,x_i) \to \chi(x,n,x).
\)
On the other hand we also know $(y_i,n,y_i) \to (x,n,x)$ so that,
using \eqref{eq:5} and Proposition \ref{prop:3},
\[
(y_i,n_i,x_i)\cdot \chi_i(y_i,n,y_i) = \chi_i(x_i,n,x_i) \to
\omega(x,n,x).
\]
This implies that
$\chi(x,n,x)= \omega(x,n,x)$.  Hence $\chi = \omega$ and condition (c)
is automatically satisfied.  Put together this shows that the graph
groupoid algebra, and therefore the graph algebra, will have Hausdorff
spectrum if and only if 
\begin{itemize}
\item no cycle has an entry and,
\item given non-shift
equivalent paths $x$ and $y$ we can find vertices $u$ and $v$ such
that there is a path from a vertex on $x$ to $u$, a path from a vertex
on $y$ to $v$, and there is no vertex $w$ which has a path to both
$u$ and $v$.  
\end{itemize}
\end{example}

One annoyance of Theorem \ref{thm:groupoidresult} is that condition
(c) requires us to deal with the dual stabilizer groupoid.  Using the
same technique as the proof of Theorem \ref{thm:groupoidresult} one
can show that if $G\unit/G$ is Hausdorff and if condition (c) holds
for sequences in $S$ (not $\widehat{S}$) then $S/G$ is Hausdorff.
This raises the question of whether $\widehat{S}/G$ is Hausdorff if and
only if $S/G$ is Hausdorff, which is interesting in its own right.
Similar to the previous section we find that this question can be
answered in the affirmative in the $T_0$ and $T_1$ cases.  More
specifically, using the
topological argument given in Proposition \ref{prop:1}, one can prove
the following result. 

\begin{prop}
\label{prop:2}
Let $G$ be a second countable, locally compact Hausdorff groupoid with
continuously varying abelian stabilizers.  Then either $G\unit/G$,
$S/G$, and $\widehat{S}/G$ are all $T_1$ (resp. $T_0$) or none of them is
$T_1$ (resp. $T_0$).  
\end{prop}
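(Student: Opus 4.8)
The plan is to prove that each of $S/G$ and $\widehat{S}/G$ is $T_1$ (resp.\ $T_0$) precisely when $G\unit/G$ is, from which the stated dichotomy is immediate. One direction is an embedding argument that works uniformly. The unit space $G\unit$ includes $G$-equivariantly into $S$ as its unit space, and into $\widehat{S}$ via $u\mapsto\mathbf{1}_u$, the trivial character on $S_u$; in each case the image is $G$-invariant, the second because $\gamma\cdot\mathbf{1}_{s(\gamma)}=\mathbf{1}_{r(\gamma)}$. Since $G$ has a Haar system, the quotient maps $S\to S/G$ and $\widehat{S}\to\widehat{S}/G$ are open, so the brief argument used for $\widehat{S}/G$ in the discussion preceding Proposition~\ref{prop:1} shows, in exactly the same way, that $G\unit/G$ is homeomorphic to its image in each of $S/G$ and $\widehat{S}/G$ with the relative topology. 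A subspace of a $T_1$ (resp.\ $T_0$) space is again $T_1$ (resp.\ $T_0$), so if either $S/G$ or $\widehat{S}/G$ is $T_1$ (resp.\ $T_0$) then $G\unit/G$ is.

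For the converse, suppose $G\unit/G$ is $T_1$ (resp.\ $T_0$). The case of $\widehat{S}/G$ is essentially Proposition~\ref{prop:1}: its proof shows $\widehat{S}/G$ is $T_1$ whenever $G\unit/G$ is $T_1$, while in the $T_0$ case Lemma~\ref{lem:1} together with \cite[Theorem~3.5]{specpaper} identify $C^*(G)\sidehat$ with $\widehat{S}/G$, so the latter inherits $T_0$. For $S/G$ I would rerun the proof of Proposition~\ref{prop:1} with $S$ in place of $\widehat{S}$ throughout. The only properties of $\widehat{S}$ used there are that it is a second countable space carrying a continuous $G$-action, that it is fibered over $G\unit$ by a continuous $G$-equivariant map, and that the $G$-action is trivial when restricted to any single fiber; for $S$ the last holds because the fibers $S_u$ are abelian and $G$ acts by conjugation, so $s\cdot t=sts\inv=t$ for $s,t\in S_u$, in parallel with \eqref{eq:2}. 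Also, just as in Proposition~\ref{prop:1}, the assumption that $G\unit/G$ is $T_0$ forces $G$ to be regular by Lemma~\ref{lem:1}, which is what licenses the step concluding $[\gamma_W]\to[u]$ in $G_u/S_u$. In detail: for $[\rho]\ne[\chi]$ in $S/G$ with a common image in $G\unit/G$ one reduces to $\rho,\chi\in S_u$ with $\rho\ne\chi$; if every neighborhood $W$ of $\rho$ met $G\cdot\chi$, then directing by decreasing $W$ one would produce $\gamma_W\in G$ and $\rho_W\in W$ with $\rho_W=\gamma_W\cdot\chi\to\rho$ and $r(\gamma_W)\to u$, pass to a subnet and choose $r_W\in S_u$ with $\gamma_W r_W\to u$, and deduce $\rho=\lim(\gamma_W r_W\cdot\chi)=u\cdot\chi=\chi$, a contradiction; hence some $q(W)$ separates $[\rho]$ from $[\chi]$, and in the $T_1$ case the symmetric neighborhood of $[\chi]$ is produced the same way. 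When the images in $G\unit/G$ differ instead, separating open sets are pulled back along the continuous factorization $S/G\to G\unit/G$ of the bundle map.

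I expect the only genuine work is the bookkeeping in transporting the proof of Proposition~\ref{prop:1} to $S$ — checking that openness of the quotient $S\to S/G$ (from the Haar system), the $G$-equivariant fibration $S\to G\unit$ with its unit section, triviality of the fiberwise conjugation action, and the regularity of $G$ are all in hand — but each of these is routine, and the homeomorphism claims for the image of $G\unit/G$ are of the same ``brief argument'' type already used in the paper.
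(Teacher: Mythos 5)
Your proposal is correct and follows essentially the route the paper intends: the paper gives no written proof of Proposition~\ref{prop:2}, saying only that it follows from ``the topological argument given in Proposition~\ref{prop:1},'' and your plan — embedding $G\unit/G$ equivariantly into $S/G$ and $\widehat{S}/G$ for one direction, and rerunning the fiberwise separation argument of Proposition~\ref{prop:1} (using openness of the quotient, triviality of the conjugation action on each abelian fiber, and regularity from Lemma~\ref{lem:1}) with $S$ in place of $\widehat{S}$ for the other — is exactly that argument, with the details correctly checked.
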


Unfortunately, again similar to the previous section, 
this proposition doesn't extend to the Hausdorff case
either, as we demonstrate below.  This example also shows
that is is not enough to verify (c) on $S$ and that working with the dual
is necessary.  

\begin{example}
Let $H$, $Y$ and $G$ be as in Example \ref{ex:3}.  Recall
that we have already shown that in this case $\widehat{S}/G$
is not Hausdorff.  The computations from Example \ref{ex:3} also show
that condition (c) does not hold on $\widehat{S}$.  Now
we will show that $S/G$ is Hausdorff and that $S$ does
satisfy condition (c).  First, given
$((q,n),y)\in G$ and $(r,x)\in S$ a computation similar to
the one preceding \eqref{eq:1} shows that
\begin{equation}
\label{eq:151}
((q,n),y)\cdot (r,x) = (r2^n,y).
\end{equation}
Suppose $[s_i]\rightarrow [s]$ and $[s_i]\rightarrow [t]$ in $S/G$.  
Since $Y/G$ is Hausdorff we can follow the same argument given
in Theorem \ref{thm:groupoidresult} to pass to subsequences, choose
new representatives, and find $\gamma_i\in G$ so that 
$s_i\rightarrow s$ and $\gamma_i\cdot s_i
\rightarrow t$ where $s,t\in S_u$.  In particular this implies $s =
(r,u)$ and $t = (q,u)$ for $r,q\in\Q$.  Suppose $s_i = (r_i,x_i)$ and $\gamma_i =
((p_i,n_i),y_i)$.  Then it follows from \eqref{eq:151} that
$\gamma_i\cdot s_i = (r_i 2^{n_i},y_i)$.  Hence $r_i\rightarrow r$ and
$r_i2^{n_i}\rightarrow q$.  However, we gave $\Q_D$ the discrete
topology so that, eventually,
\(
q = 2^{n_i}r_i = 2^{n_i}r.
\)
Now, if either $r = 0$ or $q=0$ then $s=t$.  If $r,q\ne 0$ we know
that eventually $n_i = n =\log_2(q/r)$.  We may as well pass to a
subnet and assume this is always true.  But then $n_i \cdot x_i
\rightarrow n\cdot x$.  However, we also have $n_i\cdot x_i =
\gamma_i\cdot x_i = y_i \rightarrow x$.  Thus $n\cdot x = x$.  But the
action of $\Z$ is free which implies $n=0$.  Thus $\log_2(q/r)
= 0$ and $q=r$.  It follows that $s=t$ and that $S/G$ is Hausdorff.
What is more, the above argument also shows that condition 
(c) holds for sequences in $S$.  
\end{example}


\providecommand{\bysame}{\leavevmode\hbox to3em{\hrulefill}\thinspace}
\providecommand{\href}[2]{#2}

\end{document}